\newcommand{\f}{\varphi}
\newcommand{\G}{\mathbb G}
\newcommand{\R}{\mathbb R}
\newcommand{\PP}{\mathbb P}
\newcommand{\Q}{\mathbb Q}
\newcommand{\cE}{\mathcal E}
\newcommand{\cV}{\mathcal V}
\newcommand{\cS}{\mathcal S}
\newcommand{\cQ}{\mathcal Q}
\newcommand{\cO}{\mathcal O}
\newcommand\ra{\rightarrow}
\newcommand\pss{\par\smallskip}
\newcommand\lra{\longrightarrow}
\newcommand{\shse}[3]{0 ~\ra ~#1~ \lra ~#2~ \lra ~#3~ \ra~ 0}
\DeclareMathOperator{\ed}{e.\!d.}
\DeclareMathOperator{\rk}{rk}
\DeclareMathOperator{\HH}{H\hspace{0.5pt}}
\DeclareMathOperator{\DA}{A}
\DeclareMathOperator{\gd}{g.\!d.}
\newcolumntype{C}{>{$}c<{$}}
\newtheorem{theorem}{Theorem}[section]
\newtheorem{definition}[theorem]{Definition}
\newtheorem{proposition}[theorem]{Proposition}
\newtheorem{corollary}[theorem]{Corollary}
\newtheorem{example}[theorem]{Example}
\title{Morphisms between Grassmannians}
\author{Angelo Naldi and Gianluca Occhetta}
\begin{document}
\maketitle

\begin{abstract} Denote by $\G(k,n)$ the Grassmannian of linear subspaces of dimension $k$ in $\PP^n$. We show that if $n>m$ then every morphism $\f: \G(k,n) \to \G(l,m)$ is constant.
\end{abstract}

\section{Introduction}

In \cite{Tango1}, H. Tango proved that there are no nonconstant morphisms from $\PP^n$ to the Grassmannian $\G(l,m)$ if $n >m$; this result was later used by E. Sato in \cite{Satou} to study uniform vector bundles on  $\PP^n$. In fact, given a uniform vector bundle $\cE$ of rank $r$ on the projective space, one can construct a morphism from the variety of lines passing through a point $p$ -- which is $\PP^{n-1}$ -- to an appropriate Grassmannian of linear subspaces of $\PP(\cE_p)$, whose dimension depends on the splitting type of $\cE$. If those maps are constant for every $p$, then one can prove that the bundle $\cE$ is decomposable as a sum of line bundles.

The study of uniform bundles on other Fano manifolds brought along the problem to prove the constancy of morphisms $M \to \G(l,m)$, where $M$ is the Variety of Minimal Rational Tangents (VMRT) of the chosen Fano manifold.  Tango's proof made use of the particular structure of the cohomology ring of $\PP^n$, so it was first extended to varieties whose cohomology ring, up to some degree, is isomorphic to 
the one of the projective space  (see \cite[Lemma 3.4]{MOS1}). 

In \cite[Theorem 4.2]{Pan} the author showed that, in order to obtain a Tango type result, a weaker property of the cohomology ring was needed, the so called {\em good divisibility} up to some degree.
 A further step -- see \cite[Section 4]{MOS6} -- was to replace the notion of good divisibility with the weaker notion of {\em effective good divisibility} -- see Section \ref{sec:div} for the definitions of good divisibility and effective good divisibility. 
 
 The notion of effective good divisibility
 was used in \cite[Proposition 4.7]{MOS6} to prove the constancy of morphisms from a variety with ``large'' effective good divisibility not only to Grassmannians, but also  to other rational homogeneous varieties with Picard number one. The aim of \cite{MOS6}
was to prove the diagonalizability of uniform flag bundles of small invariants on some rational homogeneous varieties. Therefore in that paper the effective good divisibility has been computed   for projective spaces, quadrics and Grassmannians of lines, since those varieties appear as (factors of) the VMRT of the rational homogeneous varieties considered. 

However, leaving aside the applications to uniform bundles, the problem of computing the effective good divisibility in order to obtain Tango type results seems worth to be considered also for other varieties.

In the present paper we address it for the Grassmannians $\G(k,n)$ of linear subspaces of dimension $k$ of $\PP^n$; this choice seems a natural generalization of Tango's setting, since the Grassmannians $\G(k,n)$ are rational homogenous manifolds obtained as a quotient of the same group as $\PP^n$; moreover, the cohomology ring of Grassmannians has a clear description in terms of Schubert cycles, which are effective. 
We will prove that $\G(k,n)$ has effective good divisibility up to degree $n$, and derive from this the main result of the paper.

\begin{theorem}\label{main} If $n>m$ then every morphism $\f: \G(k,n) \to \G(l,m)$ is constant.
\end{theorem}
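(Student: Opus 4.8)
The plan is to argue by contradiction: assume $\f$ is non-constant (we may of course assume both Grassmannians are positive-dimensional, i.e.\ $0\le k\le n-1$ and $0\le l\le m-1$, otherwise a point is involved and there is nothing to prove), and contradict the effective good divisibility of $\G(k,n)$, which by the announced result is at least $n$, hence at least $m+1$ because $n>m$. First I would record the consequences of non-constancy coming from the Picard numbers being one: if $h$ denotes the ample generator of $\mathrm{Pic}(\G(k,n))$ and $\sigma_1$ the Plücker generator of $\mathrm{Pic}(\G(l,m))$, then $\f^*\sigma_1=d\,h$ for some integer $d$, and $d\ge 1$ since the pullback of the very ample class $\sigma_1$ under a non-constant morphism is a non-zero effective divisor class; in particular $d\,h\neq 0$ in $H^2(\G(k,n))$.

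The next step is to pull back the tautological bundles. On $\G(l,m)$ let $S$ be the tautological subbundle (of rank $l+1$) and $Q$ the tautological quotient (of rank $m-l$); then $Q$ and $S^\vee$ are globally generated, $\det Q=\det S^\vee=\cO(1)$, and dualizing $0\to S\to\cO^{\,m+1}\to Q\to 0$ gives $c(Q^\vee)\,c(S^\vee)=1$. Put $A:=\f^*Q$ and $B:=\f^*S^\vee$; these are globally generated bundles on $\G(k,n)$ of ranks $m-l$ and $l+1$, with $c_1(A)=c_1(B)=d\,h$, and they satisfy
\[
\Big(\sum_{i\ge0}(-1)^i c_i(A)\Big)\Big(\sum_{j\ge0}c_j(B)\Big)=1 \quad\text{in }H^*(\G(k,n)).
\]
What matters is that every $c_i(A)$ and every $c_j(B)$ is an \emph{effective} class — either because Chern classes of globally generated bundles are represented by degeneracy loci, or because $c_i(A)=\f^*\sigma_{(i)}$ and $c_j(B)=\f^*\sigma_{(1^j)}$ are pullbacks of Schubert classes and $\f^*$ of an effective class on the homogeneous variety $\G(l,m)$ is effective — and that these classes vanish in degrees above $m-l$, respectively above $l+1$.

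Now the heart of the argument. Let $r'\le m-l$ be the largest index with $c_{r'}(A)\neq 0$ and $s'\le l+1$ the largest with $c_{s'}(B)\neq 0$; both exist, and $r',s'\ge 1$ because $c_1(A)=c_1(B)=d\,h\neq 0$. Comparing the degree $r'+s'$ parts of the two sides of the displayed identity, the only surviving contribution on the left is $(-1)^{r'}c_{r'}(A)\,c_{s'}(B)$ (all higher Chern classes vanish), while on the right it is $0$ since $r'+s'\ge 2>0$. Hence $c_{r'}(A)\cdot c_{s'}(B)=0$. But $c_{r'}(A)$ and $c_{s'}(B)$ are non-zero effective classes of positive degrees $r'$ and $s'$ with $r'+s'\le (m-l)+(l+1)=m+1\le n$, so effective good divisibility up to degree $n$ forces $c_{r'}(A)=0$ or $c_{s'}(B)=0$ — a contradiction. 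Therefore $\f$ is constant.

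The substantive difficulty of the paper is the effective good divisibility bound itself; granting it, the argument above is short, and its only delicate points are formal: that non-constancy of $\f$ already forces $d\ge1$ (so that $c_1(A),c_1(B)\neq0$, hence $r',s'\ge1$ and the degree $r'+s'$ is positive, which is what makes the right-hand side contribute $0$), and that the relevant pulled-back classes are honestly effective — which is precisely why one must work with $Q$ and $S^\vee$, whose pullbacks are globally generated, rather than with $S$. Beyond this, no information about $\G(l,m)$ is used except the ranks of its tautological bundles and the relation $c(Q^\vee)c(S^\vee)=1$, and no information about $\G(k,n)$ except Picard number one and effective good divisibility up to degree $n$.
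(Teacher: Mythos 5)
Your proposal is correct and follows essentially the same route as the paper: the paper proves Theorem \ref{main} by combining Theorem \ref{cor:edg} ($\ed(\G(k,n))=n$) with Proposition \ref{thm2}, whose proof is exactly your Tango-type argument — pull back the tautological sequence on $\G(l,m)$, use the Whitney relation, take the maximal indices of nonvanishing (effective) Chern classes, and invoke effective good divisibility in degree at most $m+1\le n$. The only difference is cosmetic: you argue by contradiction using $d\ge 1$ to force the maximal indices to be positive, while the paper concludes directly that both maximal indices are zero, hence $\f^*\cO_{\G(l,m)}(1)=0$ and $\f$ is constant.
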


A straightforward consequence of Theorem \ref{main} is the following result.\par\smallskip

\begin{corollary}\label{cor:main} \! If $n>m$ then every morphism from $\G(k,n)$ to a rational homogeneous variety  of type $\DA_m$ is constant.
\end{corollary}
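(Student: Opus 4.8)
The strategy is to deduce Theorem~\ref{main} from two facts: that $\G(k,n)$ has effective good divisibility up to degree $n$ (in the sense recalled in Section~\ref{sec:div}), and that, by \cite[Proposition~4.7]{MOS6}, effective good divisibility of the source in a sufficiently large range forces the constancy of any morphism to a Grassmannian. For the first fact I would use the standard presentation of the Chow ring of $\G(k,n)$, regarded as the Grassmannian of $(k+1)$-dimensional subspaces of an $(n+1)$-dimensional vector space: its Schubert classes $\sigma_\lambda$, indexed by the partitions $\lambda=(\lambda_1\ge\cdots\ge\lambda_{k+1}\ge 0)$ fitting inside the $(k+1)\times(n-k)$ rectangle, form an additive basis; they are effective; every effective class of a fixed codimension is a nonnegative integral combination of them; and the Littlewood--Richardson structure constants $c^{\nu}_{\lambda\mu}$ of the product are nonnegative. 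These facts give a clean reduction: for nonzero effective classes $\alpha=\sum_\lambda a_\lambda\sigma_\lambda$ and $\beta=\sum_\mu b_\mu\sigma_\mu$, the expansion $\alpha\beta=\sum_\nu\bigl(\sum_{\lambda,\mu}a_\lambda b_\mu c^{\nu}_{\lambda\mu}\bigr)\sigma_\nu$ involves no cancellation, so $\alpha\beta=0$ exactly when $\sigma_\lambda\sigma_\mu=0$ for every pair with $a_\lambda,b_\mu>0$. Hence $\G(k,n)$ has effective good divisibility up to degree $n$ if and only if $\sigma_\lambda\cdot\sigma_\mu\neq 0$ for all $\lambda,\mu$ with $|\lambda|+|\mu|\le n$.

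For this last assertion I would invoke the classical non-vanishing rule of Schubert calculus (a consequence of Pieri's formula and Poincaré duality): $\sigma_\lambda\cdot\sigma_\mu\neq 0$ in $\G(k,n)$ if and only if $\lambda$ is contained in the complementary partition $\mu^\vee=(n-k-\mu_{k+1},\dots,n-k-\mu_1)$, equivalently $\lambda_i+\mu_{k+2-i}\le n-k$ for every $i=1,\dots,k+1$. It then remains to check the elementary implication that $|\lambda|+|\mu|\le n$ forces $\lambda_i+\mu_{k+2-i}\le n-k$ for all $i$. Arguing by contradiction, suppose $\lambda_i+\mu_{k+2-i}\ge n-k+1$ for some $i$; from $\lambda_1\ge\cdots\ge\lambda_i$ and $\mu_1\ge\cdots\ge\mu_{k+2-i}$ one has $|\lambda|\ge i\,\lambda_i$ and $|\mu|\ge(k+2-i)\,\mu_{k+2-i}$, hence $|\lambda|+|\mu|\ge i\,\lambda_i+(k+2-i)\,\mu_{k+2-i}$. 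Minimizing the linear form $i\,x+(k+2-i)\,y$ over $0\le x,y\le n-k$ with $x+y\ge n-k+1$ is a one-line computation whose minimum equals $n+1$ (attained at $(x,y)=(n-k,1)$ for $i=1$ and at $(x,y)=(1,n-k)$ for $i=k+1$), so $|\lambda|+|\mu|\ge n+1$, a contradiction. This proves that $\G(k,n)$ has effective good divisibility up to degree $n$.

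To conclude, note that $n>m$ is the same as $n\ge m+1$, whereas $\G(l,m)$ fails effective good divisibility already at degree $m+1$: by the criterion above $\sigma_{(1^{l+1})}\cdot\sigma_{(m-l)}=0$ (the inequality fails at $i=l+1$), although both classes are nonzero and their codimensions add to $m+1$. Theorem~\ref{main} now follows from \cite[Proposition~4.7]{MOS6} applied to $\f\colon\G(k,n)\to\G(l,m)$: its hypothesis is met since $\G(k,n)$ has effective good divisibility in the range $m+1\le n$ where the target first admits a vanishing product of nonzero effective classes; concretely, a nonconstant $\f$ would pull such a pair back to nonzero effective classes on $\G(k,n)$ with vanishing product in codimensions summing to $m+1$, which is impossible. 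Finally, Corollary~\ref{cor:main} follows from Theorem~\ref{main}: a rational homogeneous variety of type $\DA_m$ embeds, through its natural projections, into a product of Grassmannians $\G(l,m)$, and since the composition of a morphism from $\G(k,n)$ with each projection is constant, the morphism itself is constant.

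The step I expect to carry the real weight is the bookkeeping assembled in the first two paragraphs: the identification of effective classes with nonnegative Schubert combinations, the no-cancellation principle from positivity of the $c^{\nu}_{\lambda\mu}$, and the containment form $\sigma_\lambda\cdot\sigma_\mu\neq 0\iff\lambda\subseteq\mu^\vee$ of the non-vanishing criterion; once these are in place the numerical estimate is routine and the passage from the divisibility statement to Theorem~\ref{main} is a direct appeal to \cite[Proposition~4.7]{MOS6}.
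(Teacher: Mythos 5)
Your proof of the corollary itself is the same as the paper's: you use the natural projections $\pi_j\colon X\to\G(l_j,m)$ of the flag variety of type $\DA_m$, apply Theorem \ref{main} to each composition $\pi_j\circ\varphi$, and conclude from the fact that the product of the projections is injective. Where you genuinely diverge from the paper is in the supporting divisibility statement. The paper proves $\ed(\G(k,n))\ge n$ by explicitly constructing a Littlewood--Richardson tableau witnessing $\delta^c_{a,b}>0$ whenever $|a|+|b|\le n$ (Proposition \ref{prop:hell}); you instead invoke the classical non-vanishing criterion $\sigma_\lambda\cdot\sigma_\mu\neq 0\iff\lambda\subseteq\mu^\vee$, i.e.\ $\lambda_i+\mu_{k+2-i}\le n-k$ for all $i$, and reduce the claim to a short optimization showing that any violated inequality forces $|\lambda|+|\mu|\ge n+1$; your computation of the minimum (value $n+1$, attained at $i=1$, $(x,y)=(n-k,1)$ and $i=k+1$, $(x,y)=(1,n-k)$) is correct. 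Your route is shorter and moreover gives the sharp description of exactly which Schubert products vanish (so the equality $\ed(\G(k,n))=n$, via $\sigma_{(1^{l+1})}\cdot\sigma_{(m-l)}=0$, comes for free), at the price of importing a standard fact that should be cited (e.g.\ it is in \cite{3264} or Fulton's book) rather than reproved; the paper's construction is longer but self-contained given the Littlewood--Richardson rule it recalls. Two minor points: the fact that effective classes are nonnegative combinations of Schubert classes should carry the reference the paper uses (\cite{FMSS}, \cite{Kopper}), and ``nonnegative integral'' should be ``nonnegative real'' to match Definition \ref{def:ed}; and your parenthetical gloss of how \cite[Proposition 4.7]{MOS6} works (``a nonconstant $\f$ would pull such a pair back to nonzero effective classes'') is not the actual mechanism --- the argument runs through the Chern classes of $\f^*\cQ$ and $\f^*\cS^\vee$ and the relation $\lambda(t)\mu(t)=1$, as in the paper's Proposition \ref{thm2} --- but since you apply the cited proposition as a black box, exactly as the paper does, this does not create a gap.
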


\section{Divisibility}\label{sec:div}

In this section we will recall the notions of good divisibility and effective good divisibility.
Let $M$ be a smooth complex projective manifold. Set
$\HH^{j}(M):=\HH^{j}(M,\R)$.

\begin{definition}\label{def:gd}\cite[Cf. Definition 4.1]{Pan}
A variety $M$ has {\em good divisibility up to degree $s$} if, given  $x_i \in \HH^{2i}(M),$  $x_j \in \HH^{2j}(M)$ with $i+j \le s$ and $x_i  x_j=0$, we have $x_i = 0$ or $x_j = 0$.
The {\em good divisibility} of $M$, denoted by $\gd(M)$, is the maximum integer $s$ such that $M$  has  good divisibility up to degree $s$.
\end{definition}

The concept of good divisibility can be refined, by considering only effective classes, i.e.,
classes that can be written as a  real linear combination of classes of subvarieties  with non negative coefficients.

\begin{definition}\cite[Definition 4.2]{MOS6}
\label{def:ed}
A variety $M$  has {\em effective good divisibility up to degree $s$} if, given  effective $x_i \in \HH^{2i}(M),$  $x_j \in \HH^{2j}(M)$ with $i+j \le s$ and $x_i  x_j=0$, we have $x_i = 0$ or $x_j = 0$.
The {\em effective good divisibility} of $M$, denoted by $\ed(M)$ is the maximum integer $s$ such that $M$  has  effective good divisibility up to degree $s$.
\end{definition}

It is clear from the definition that $\gd(M) \le \ed(M) \le \dim(M)$.

\begin{example} In \cite[Section 4]{MOS6} the divisibilities are computed in the following cases:
\begin{center}
\begin{tabular}{C||C|C}
M & \gd(M) & \ed(M)\\
\hline
\PP^n & n & n\\
\hline
\Q^{2n-1} & 2n-1 & 2n-1\\
\hline
\Q^{2n}  & n & 2n-1 \\
\hline
\G(1,n) & n-1 & n\\
\hline
\end{tabular}
\end{center}
\end{example}\medskip

The last two examples show that, in general $\gd(M) < \ed(M)$. 

The notion of effective good divisibility has been used in \cite[Section 4]{MOS6} to study morphisms to rational homogeneous varieties  of classical type of Picard number one. The following 
 is a special case of \cite[Proposition 4.7]{MOS6}, that we report here for the reader's convenience, since it is a key step in the proof of Theorem \ref{main}.

\begin{proposition}\label{thm2} Let $M$ be a smooth complex projective variety. If $ \ed(M) >m$ then any morphism $\f:M \to \G(l,m)$
is constant.
\end{proposition}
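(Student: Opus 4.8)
The plan is to induct on $m$. For $m=0$, or more generally whenever $l=m$, the Grassmannian $\G(l,m)$ is a point and there is nothing to prove; so assume $m\ge 1$, $l<m$, and the statement for all smaller values of $m$, and let $\f\colon M\to\G:=\G(l,m)$ be a morphism with $\ed(M)>m$, i.e.\ $\ed(M)\ge m+1$. Let $S$ and $Q$ be the tautological sub- and quotient bundles on $\G$, of ranks $l+1$ and $m-l$, sitting in $0\to S\to\cO_\G^{m+1}\to Q\to 0$. Dualising this sequence shows $S^\vee$ is also a quotient of a trivial bundle, so both $\f^*Q$ and $\f^*S^\vee$ are globally generated, and hence the classes $c_{m-l}(\f^*Q)=\f^*c_{m-l}(Q)$ and $c_{l+1}(\f^*S^\vee)=\f^*c_{l+1}(S^\vee)$ are effective. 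Pulling back $c(S)\,c(Q)=1$ gives $c(\f^*S)\,c(\f^*Q)=1$ on $M$; taking the component of degree $2(m+1)$ yields $\sum_{i+j=m+1}c_i(\f^*S)\,c_j(\f^*Q)=0$, and since $c_i(\f^*S)=0$ for $i>l+1$ and $c_j(\f^*Q)=0$ for $j>m-l$ the only surviving term is $i=l+1$, $j=m-l$, so $c_{l+1}(\f^*S)\,c_{m-l}(\f^*Q)=0$, i.e.\ $c_{l+1}(\f^*S^\vee)\cdot c_{m-l}(\f^*Q)=0$ up to sign. These two effective classes have degrees summing to $(l+1)+(m-l)=m+1\le\ed(M)$, so effective good divisibility forces one of them to vanish; via the isomorphism $\G(l,m)\cong\G(m-l-1,m)$ (orthogonal complement), which identifies $S^\vee$ with the quotient bundle of the dual Grassmannian (of the same rank $l+1$), we may relabel so that the vanishing factor is $c_{m-l}(\f^*Q)$.

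Next I would convert this vanishing into geometry. View $\G$ as the variety of $l$-dimensional linear subspaces of $\PP^m$; then $Q$ is globally generated by its sections $s_v$, $v\in\C^{m+1}$, with $s_v$ vanishing at $[\Lambda]$ exactly when $[v]\in\Lambda$, so $\f^*Q$ is generated by the sections $\f^*s_v$. Since $\f^*Q$ has rank $m-l$, is generated by the $\f^*s_v$, and $c_{m-l}(\f^*Q)=0$, a general such section $\f^*s_v$ has empty zero locus --- the zero scheme of a general section of a globally generated bundle is empty or has pure codimension equal to the rank, and represents the top Chern class. Thus for general $v$ the point $[v]$ lies on no plane $\f(x)$; equivalently, $\mathcal{U}:=\bigcup_{x\in M}\f(x)$ is a proper subvariety of $\PP^m$ (it is closed, being the image under the second projection of the incidence variety $\{(x,[v])\in M\times\PP^m:[v]\in\f(x)\}$, which is projective, being a closed subvariety of $M\times\PP^m$). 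Pick $[v]\notin\mathcal{U}$. Projection from $[v]$ maps each $l$-plane avoiding $[v]$ isomorphically onto an $l$-plane of $\PP^{m-1}$, and so defines a morphism $\pi_v$ from the open set $V=\{[\Lambda]\in\G:[v]\notin\Lambda\}$, which contains $\f(M)$, to $\G(l,m-1)$.

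Now set $g:=\pi_v\circ\f\colon M\to\G(l,m-1)$. Since $\ed(M)\ge m+1>m-1$, the inductive hypothesis applies and $g$ is constant, say $g\equiv[\Lambda_0]$. Then $\f(M)\subseteq\pi_v^{-1}([\Lambda_0])$, and this fibre is an affine space of dimension $l+1$: its points are the $l$-planes not through $[v]$ contained in the $(l+1)$-plane spanned by $[v]$ and $\Lambda_0$ --- equivalently, the hyperplanes of a $\PP^{l+1}$ that avoid a fixed point, a copy of $\mathbb{A}^{l+1}$. As $M$ is a connected projective variety and every morphism from such a variety to an affine variety is constant, $\f$ is constant. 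This closes the induction.

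I expect the genuinely delicate point to be the case $c_{m-l}(\f^*Q)=0$, where effective good divisibility gives nothing directly: one has to (i) see that the vanishing of the top Chern class of the globally generated bundle $\f^*Q$ really forces the family of $l$-planes $\f(M)$ to omit a point of $\PP^m$ --- a Kleiman--Bertini-type genericity statement about zero loci of general sections, for which it is essential that the $\f^*s_v$ already generate $\f^*Q$ --- and (ii) check that projection from the omitted point is a bona fide morphism onto the smaller Grassmannian $\G(l,m-1)$ with affine fibres, so that the inductive hypothesis can be invoked. Granting these, the remaining steps --- the manipulation of Chern classes and the description of fibres of linear projections --- are routine.
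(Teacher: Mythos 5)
Your proof is correct, but after the shared opening it takes a genuinely different route from the paper's. Both arguments pull back the tautological sequence, use the Whitney relation $c(\f^*\cS^\vee)\,c(\f^*\cQ)=1$ together with the effectivity of Chern classes of globally generated (nef) bundles, and make an appeal to effective good divisibility in degree at most $m+1\le\ed(M)$. The paper, however, applies it not to the top Chern classes but to $\lambda_{i_0}\mu_{j_0}$, where $i_0,j_0$ are the \emph{maximal} indices with $\f^*c_{i_0}(\cQ)\ne 0$ and $\f^*c_{j_0}(\cS^\vee)\ne 0$: the relation $\lambda(t)\mu(t)=1$ forces $\lambda_{i_0}\mu_{j_0}=0$, so effective good divisibility gives $i_0=j_0=0$ in one stroke, whence $\f^*c_1(\cQ)=\f^*\cO_{\G(l,m)}(1)=0$ and $\f$ is constant -- no induction, no geometry. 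You instead use only the extreme term $c_{l+1}(\f^*S^\vee)\cdot c_{m-l}(\f^*Q)=0$, a weaker consequence, and must then convert the vanishing of the top Chern class of the globally generated bundle $\f^*Q$ into the existence of a point of $\PP^m$ missed by every $\f(x)$ (the Bertini-type fact you flag; it is standard and does apply here, precisely because the sections $\f^*s_v$ generate $\f^*Q$ and a nonempty zero scheme of pure expected codimension would give a nonzero effective class), project from that point to land in $\G(l,m-1)$, invoke induction on $m$, and finish with the affineness of the fibres of the projection; the duality $\G(l,m)\simeq\G(m-l-1,m)$ correctly disposes of the other branch of the divisibility dichotomy. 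The trade-off: the paper's device of taking the maximal nonvanishing indices makes the proof a few lines and is the form that generalizes to other rational homogeneous targets (it is a special case of the cited result of Mu\~noz--Occhetta--Sol\'a Conde), while your version is more geometric, showing explicitly what the vanishing of the top Chern class means for the family of linear spaces $\f(x)$, at the cost of the extra Bertini input and the inductive scaffolding. (Both proofs, yours and the paper's, use at the last step that $M$ is connected.)
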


\begin{proof}
Consider the exact sequence of vector bundles on $\G(l,m)$:
\begin{equation}\shse{\mathcal{S}^\vee}{\cO^{\oplus m+1}}{\cQ}\label{eq:exact}\end{equation}
where $\cS$, of rank $m-l$, and $\cQ$, of rank $l+1$, are the universal subbundle and the universal quotient bundle. 
~Set 
\begin{align*}
\lambda_i&= \f^*c_i(\cQ)  & \text{for} \quad  & i=0, \dots, \rk \cQ \\
\mu_j&= \f^*c_j(\cS^\vee) & \text{for} \quad  & j=0, \dots, \rk \cS^\vee
\end{align*} 
The Chern classes of the nef bundles $\cQ$ and $\cS$ are effective and non zero, hence  $\lambda_i \not = 0$ is effective for every $i=0, \dots, \rk \cQ$ and $\mu_j \not=0$ is effective or antieffective for every $ j=0, \dots, \rk \cS^\vee$.
Set 
\[\lambda(t)  =\sum_i\lambda_it^i \quad \mu(t)=\sum_i\mu_it^i\]
by the exact sequence (\ref{eq:exact}) we have
$$\lambda(t)\mu(t)=1$$
Let $i_0$ and $j_0$ be the maximum indexes for which $\lambda_{i_0} \not = 0$ and  $\mu_{j_0} \not = 0$. 
If $ i_0+ j_0 \not =0$, then
$$\lambda_{  i_0}\mu_{j_0}\,=0,$$
but the assumption implies that $i_0+ j_0 \le  \ed(M)$, 
so we must have $i_0=j_0=0$.  In particular 
$$\lambda_1 =\f^*\det \cQ = \f^*\cO_{\G(l,m)}(1) = 0$$
 hence $\f$ is constant.
\end{proof}

\section{Schubert calculus}

We will now recall some basic facts about Schubert calculus in $\G(k,n)$. We refer to \cite{3264} and \cite{Fult} for a complete account on the subject.

Let us identify $\G(k,n)$ with the Grassmannian $G(k+1,V)$ of vector subspaces of dimension $k+1$ in a vector space $V$ of dimension $n+1$; consider a complete flag $\cV$ of vectors subspaces of $V$:
\[0 \subsetneq V_1 \subsetneq V_2 \subsetneq \dots \subsetneq V_n \subsetneq V\]
Given a sequence of integers $a=(a_1, \dots, a_{k+1})$ such that
\[n-k \ge a_1 \ge a_2 \ge \dots \ge a_{k+1} \ge 0\]
the Schubert variety $\Sigma_a(\cV)$ is defined as
\begin{multline*}
\Sigma_a(\cV) = \{W \in G(k+1,n+1)\ |  \dim(V_{n-k+i-a_i} \cap W) \ge i\ \text{for all}\ i \}
\end{multline*}
The codimension of $\Sigma_a(\cV)$ is $|a|:= \sum a_i$, the class $[\Sigma_a(\cV)] \in \HH^{2|a|}(\G(k,n))$ does not depend on the choice of $\cV$, will be denoted by $\sigma_a$ and called a Schubert cycle. The Schubert cycles form a basis for the cohomology of $\G(k,n)$.
 
The  cycle $\sigma_a$ may be represented by the corresponding Young diagram, which is a collection of left-justified rows of boxes in which the $i-th$ row has length $a_i$.
The product of two Schubert cycles can be computed using the the Littlewood-Richardson rule, as follows:
\begin{equation}\label{LR}
\sigma_a \cdot \sigma_b = \sum_{c} \delta^c_{a,b} \sigma_c
\end{equation}
where $|c|=|a|+|b|$, the Young diagram of $c$ contains the Young diagram of $a$ and
$\delta^c_{a,b}$ is equal to the number of Littlewood–Richardson tableaux of skew shape $c/ a$  and of weight $b$, i.e., to the number of ways in which on can fill the boxes of $c/a$ with integers $1, \dots, k+1$ in such a way that:
\begin{itemize}[topsep=5pt]
\item the integer $i$ appears $b_i$ times;
\item the integers are non decreasing along the rows (from left to right);
\item the integers are  strictly increasing along the columns (from up to down);
\item the word obtained concatenating the reversed rows is a lattice word, that is,  in every initial part of the sequence any number $i$ occurs at least as often as the number $i+1$.
\end{itemize}

\section{Proofs}

In this section we will prove Theorem \ref{main} and Corollary \ref{cor:main}.
In view of Proposition \ref{thm2} to prove Theorem \ref{main} it is enough to show that
 the effective good divisibility of $\G(k,n)$ is $n$. We start with a special case. 

\begin{proposition}\label{prop:hell}
Let $\sigma_a$ and $\sigma_b$ be two Schubert cycles in $\G(k,n)$, such that $|a|+|b| \le n$. Then $\sigma_a \cdot \sigma_b >0$.
\end{proposition}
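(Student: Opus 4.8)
The plan is to show that the product $\sigma_a\cdot\sigma_b$ has at least one Littlewood--Richardson coefficient $\delta^c_{a,b}$ that is strictly positive, since all Schubert cycles are effective and hence the whole sum in (\ref{LR}) is an effective class which is nonzero as soon as one summand survives. Concretely, the strategy is to exhibit an explicit target partition $c$ with $|c|=|a|+|b|$, $a\subseteq c$, $c$ fitting inside the $(k+1)\times(n-k)$ rectangle, and at least one valid LR filling of the skew shape $c/a$ of weight $b$. The natural candidate is the ``greedy'' partition obtained by stacking the rows of $b$ on top of (or adjacent to) the rows of $a$ so as to stay inside the rectangle; the hypothesis $|a|+|b|\le n$ is exactly what should guarantee that such a $c$ fits. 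The point to keep in mind is that $n=(n-k)+k$, so a partition of size $\le n$ can always be placed inside the $(k+1)\times(n-k)$ box with room to spare, and this slack is what makes the construction work.

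The key steps, in order, are: (1) reduce to producing a single nonzero $\delta^c_{a,b}$; (2) define $c$ explicitly — I expect the cleanest choice is to take $c_i = a_i$ for $i\ge 2$ together with enough extra boxes in the first row, or more symmetrically to let $c$ be the partition whose Young diagram is $a$ with the diagram of $b$ appended so that the first row of $b$ goes into a fresh row and the columns never exceed height $k+1$; (3) check $a\subseteq c$ and $c\subseteq (k+1)\times(n-k)$ using $|a|+|b|\le n$; (4) write down the canonical filling of $c/a$ — fill the part of $c/a$ coming from the $i$-th row of $b$ entirely with the integer $i$ — and verify the four LR conditions (row-weakly-increasing, column-strictly-increasing, correct content, lattice/Yamanouchi word). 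For the canonical ``row-$i$-gets-all-$i$'s'' filling the row and content conditions are immediate, the lattice-word condition holds because reading the reversed rows top-to-bottom produces $1\cdots1\,2\cdots2\cdots$, and the only real constraint is column-strict increase, which forces the boxes labelled $i$ to lie strictly below those labelled $i-1$ in each column — i.e. it constrains how much of $b$'s $i$-th row may overlap horizontally with the placement of earlier rows.

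The main obstacle is step (3)–(4): arranging $c$ so that simultaneously (a) it contains $a$, (b) it stays inside the $(k+1)\times(n-k)$ rectangle, and (c) the skew shape $c/a$ admits a column-strict filling of weight $b$. These pull in different directions — making $c$ ``tall and thin'' helps contain $a$ and get column-strictness but risks exceeding $n-k$ columns, while making it ``short and wide'' risks exceeding $k+1$ rows or losing column-strictness. I expect the resolution to be a careful but elementary bookkeeping argument: since $|a|\le n$ and $a$ has at most $k+1$ rows each of length at most $n-k$, there is always a free corner of the rectangle where the blocks of $b$ can be inserted one row at a time, each new block placed in a column range disjoint from the previous one so that column-strictness is automatic; the inequality $|a|+|b|\le n$ guarantees the cumulative width stays $\le n-k$ (or the cumulative height stays $\le k+1$) at every stage. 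Making this insertion procedure precise — and confirming it never runs out of room, using $|a|+|b|\le n$ — is the crux of the argument; everything else is a routine verification of the LR conditions for the resulting tableau.
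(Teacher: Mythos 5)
Your reduction to exhibiting a single positive Littlewood--Richardson coefficient, and your treatment of the favourable situation via the canonical filling (take $c=a+b$ and give every box of $c/a$ in row $i$ the label $i$), coincide with the paper's first step and its easy case $a_1+b_1\le n-k$. But the entire difficulty of the proposition lies in the remaining case, where $c=a+b$ does not fit in the $(k+1)\times(n-k)$ box, and there your plan is both unexecuted and, as described, incorrect. Two of the supporting claims fail: a partition of size $\le n$ need \emph{not} fit in the box (a single row of length $n$ already does not, for $k\ge 1$), and it is not true that ``the blocks of $b$ can be inserted one row at a time, each new block in a column range disjoint from the previous one.'' For a concrete obstruction to the one-block-per-row scheme, take $\G(7,10)$ (box $8\times 3$), $a=(3,3,1)$, $b=(3)$, so $|a|+|b|=10=n$: no single row of any admissible $c$ can absorb the three boxes of $b$ (rows with $a_r=0$ lie below row $3$, which has length $1$, so a length-$3$ row there would violate the partition condition), yet $\sigma_a\sigma_b\neq 0$ via $c=(3,3,3,1)$, where the boxes labelled $1$ are necessarily split between two rows. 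Once labels get split across rows, rows of $c/a$ contain several distinct labels, so neither column-strictness nor the lattice-word property is ``automatic'' and your claim that the reading word is simply $1\cdots1\,2\cdots2\cdots$ breaks down; this is exactly the bookkeeping you defer, and it is the actual content of the proof.

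The paper resolves it with two ingredients you do not have. First, a duality reduction $\G(k,n)\simeq\G(n-k-1,n)$ allows one to assume $2k\le n-1$, whence $|a|+|b|\le n<2(n-k)$; this localizes the overflow, giving $a_i+b_j<n-k$ for all $i,j\ge 2$ and $a_{k+1}=b_{k+1}=0$ (note that the $\G(7,10)$ example above becomes an easy case after dualizing, which is precisely the work this reduction does). Second, an explicit insertion procedure: the $b_1$ boxes are distributed greedily along the vertical strip $\bar a/a$ (so a single part of $b$ may occupy several rows), and then each $b_i$ is placed in the two rows freed at the previous step, with the counts $\beta^i_j$ tracked so that $\beta^i_j\le\beta^{i-1}_j$; column-strictness and the lattice condition are then verified from these inequalities rather than assumed. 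Without the duality reduction, or some equivalent exploitation of $|a|+|b|\le n$ to control how badly $a+b$ overflows the box, and without an explicit rule replacing ``there is always a free corner,'' the proposal does not yet constitute a proof.
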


\begin{proof}
Via the duality of $\PP^n$ we have isomorphisms $\G(k,n) \simeq \G(n-k-1,n)$, so we may assume that $2k \le n- 1$. In particular 
\begin{equation}\label{eq:1}
n-k > n/2.
\end{equation}
hence
\begin{equation}\label{eq:2}
|a|+|b| \le n < 2(n-k)
\end{equation}
By formula (\ref{LR}), since all the $\delta^c_{a,b}$ are nonnegative and the Schubert cycles are effective, it is enough to prove that  $\delta^c_{a,b}>0$ for some $c$.

We may assume that $a_1 + b_1 > n-k$, otherwise
taking $c=a+b$ and numbering the boxes in $c/a$ with the number of the row to which they belong, we find a Littlewood–Richardson tableaux of skew shape $c/a$  and of weight $b$.
 By  formula (\ref{eq:2})
we then have 
\begin{equation*}
a_i+b_j < n-k\ \text{for every} \ i,j \ge 2
\end{equation*}
In particular
\begin{equation}\label{eq:3}
b_j < n-k - a_2\ \text{for every} \ j \ge 2
\end{equation}
and
\[
\sum_{i=2}^{k+1}(a_i+b_i) < k
\]
Thus  $a_{k+1}=b_{k+1}=0$.


Set  $\bar a=(n-k-a_1, a_1-a_2, \dots, a_{k}-a_{k+1})$, and consider the skew shape $\bar a/a$, which contains $n-k$ boxes, no two of which on the same row. 

Mark $b_1$ boxes of $\bar a/a$ with the integer $1$, starting from the first row and moving from left to right. Denote by $\beta^1_j$ the number of boxes in the $j$-th row that have been marked with $1$'s and by $\beta^1$  the skew Tableaux consisting of the marked boxes in $\bar a /a$.

Set $a_1=(a_1+\beta^1_1, \dots, a_{k+1}+\beta^1_{k+1})$, 
and consider the sequence
\[\bar a^1= a^1 + (0, \beta_1^1, \beta_2^1,0, \dots, 0)\]
and the skew shape $\bar a^1/a^1$, which contains $\beta^1_1+\beta_2^1 $ boxes, no two of which are on the same row, since $a_3+\beta^1_3 \le a_2$.
Mark $b_2$ boxes of $\bar a^1/a^1$ with the integer $2$, starting from the first non empty row and moving from left to right. This is possible since $\beta^1_1+\beta_2^1 = \min\{b_1,n-k-a_2\}$, so, by (\ref{eq:3}), we have $b_2 \le \beta^1_1+\beta_2^1$.
Denote by $\beta^2_j$ the number of boxes in the $j$-th row that have been marked with $2$'s and by $\beta^2$  the skew Tableaux consisting of the marked boxes in $\bar a^1 /a^1$.

Repeat the procedure, starting at the step $i$ from the sequence 
\[\bar a^{i-1}= a^{i-1} + (0, \dots, 0, \beta_1^{i}, \beta_2^{i},0, \dots, 0)\]
where the two nonzero entries are the ones in positions $i,i+1$.
Consider the skew shape $\bar a^{i-1}/a^{i-1}$, which contains $\beta^{i-1}_1+\beta_2^{i-1} $ boxes, no two of which are on the same row, since $a_{i+1}+\beta^1_{i+1} \le a_i$.\\
Mark $b_i$ boxes of this skew shape with the integer $i$, starting from the first non empty row and moving from left to right. This is possible since $\beta^{i-1}_1+\beta_2^{i-1}  = b_{i-1}$, so, $b_i \le \beta^{i-1}_1+\beta_2^{i-1}$.  
Denote by $\beta^i_j$ the number of boxes in the $j$-th row that have been marked with $i$'s and by $\beta^i$  the skew Tableaux consisting of the marked boxes in $\bar a^{i-1} /a^{i-1}$.  Set $a^i=a^{i-1}+\beta^i$.

Set $c=a^k$; we claim that $\delta^c_{a,b}>0$.
The entries of the sequence $c$ are
\begin{align*}
c_1&=a_1+\beta^1_1 = n-k\\
c_2 &= a_2+\beta^1_2+\beta^2_1\\
\dots  &  \dots\\
c_j &= a_j+\beta^1_j+\beta^{j-1}_2 + \beta^j_1\\
\dots  &  \dots\\
c_{k+1} &=\beta^1_{k+1}+\beta^{k}_2 
\end{align*}
By construction $\beta^1_j+a_j \le a_{j-1}$ and $\beta_j^{i} \le \beta_j^{i-1}$, hence the sequence is not increasing. 

The Young diagram of the skew shape $c/a$ is the diagram of the Young tableaux $\beta^1+ \dots +\beta^k$; in this Tableaux by construction the rows are non decreasing and the columns are strictly increasing.
The fact that the word obtained concatenating the reversed rows is a lattice word follows from the inequalities  $\beta_j^{i} \le \beta_j^{i-1}$, which say that the number of $i-1$ in row $j+i-1$ is bigger than or equal to the number of
$i$ in row $j+i$.
\end{proof}

\begin{example}
The following is an example with $n=12, k=5$, to illustrate the case $a_1+b_1 \le n-k$ of the proof of Proposition \ref{prop:hell}\\
\[
a= \yng(3,2,1) \quad  b= \yng(2,2,1)\quad c=\yng(5,4,2)\]
\[
c / a = \,\, \young(:::11,::22,:3)
\]
\end{example} \pss
 
\begin{example}
The following is an example with $n=21, k=10$, to illustrate the case $a_1+b_1 > n-k$ of the proof of Proposition \ref{prop:hell}\\
\[
a= \yng(8,3,1) \quad  b= \yng(4,4,1)\]
\[
\quad c/a = \,\, \young(::::::::111,:::1222,:23)
\]
\end{example}  \pss
 
We are now ready to compute the effective good divisibility of $\G(k,n)$. \pss
 
\begin{theorem}\label{cor:edg} The effective good divisibility of $\G(k,n)$ is $n$.
\end{theorem}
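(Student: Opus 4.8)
The plan is to prove the two inequalities $\ed(\G(k,n)) \ge n$ and $\ed(\G(k,n)) \le n$ separately. The lower bound is essentially Proposition \ref{prop:hell}: if $x_i \in \HH^{2i}(\G(k,n))$ and $x_j \in \HH^{2j}(\G(k,n))$ are effective with $i+j \le n$ and both nonzero, I would write each as a nonnegative real combination of Schubert cycles, $x_i = \sum_a r_a \sigma_a$ and $x_j = \sum_b s_b \sigma_b$ with all $r_a, s_b \ge 0$ and at least one strictly positive coefficient in each sum (this uses that the Schubert cycles form a basis, so the decomposition is genuine and the "effective" hypothesis forces nonnegativity of coefficients). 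Then $x_i x_j = \sum_{a,b} r_a s_b (\sigma_a \cdot \sigma_b)$, and since every term $\sigma_a \cdot \sigma_b$ is a nonnegative combination of Schubert cycles that is strictly positive by Proposition \ref{prop:hell} (note $|a| = i$, $|b| = j$, so $|a|+|b| \le n$), there is no cancellation and $x_i x_j > 0$, in particular nonzero. Hence $\ed(\G(k,n)) \ge n$.

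For the upper bound $\ed(\G(k,n)) \le n$ I need to exhibit two effective nonzero classes $x_i, x_j$ with $i+j = n+1$ and $x_i x_j = 0$. The natural candidates are "complementary" Schubert cycles: take $\sigma_a$ with $a = (n-k, 0, \dots, 0)$, which has codimension $n-k$, and $\sigma_b$ with $b = (n-k, n-k, \dots, n-k)$ ($k+1$ copies), which has codimension $(k+1)(n-k) = \dim \G(k,n)$ — that product is a point, so that particular pairing is not zero. Instead I would pick $a = (n-k)$ (a single box-row of length $n-k$, i.e. a "special" Schubert cycle $\sigma_{n-k}$, which is the class of the sub-Grassmannian of $(k+1)$-planes meeting a fixed line, of codimension $n-k$) paired against $b$ of codimension $k+1$ with $b_1 = n-k$ forced to overflow: more precisely one should choose the two cycles so that every $c$ with $|c| = |a|+|b|$, $c \supseteq a$, fails to admit a Littlewood–Richardson filling — the simplest obstruction is that $c_1 \le n-k$ is violated. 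Concretely, take $a = (n-k)$ and $b = (1,1,\dots,1)$ with enough $1$'s; then in any $c = a + (\text{something})$ one is forced to add boxes past column $n-k$ in row $1$ because the column-strict condition on a single-column $b$ stacks all the new boxes, and if $k$ is small relative to $n$ this cannot be accommodated within the $n-k$ column bound. I would tune $(a,b)$ precisely so that $|a|+|b| = n+1$ and the only way to place $|b|$ new boxes column-strictly in a diagram contained in the $(k+1)\times(n-k)$ rectangle and containing $a$ forces some $c_i > n-k$, making the product vanish.

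The main obstacle I anticipate is making the upper-bound construction uniform in $k$: for $k$ close to $n$ (equivalently, by $\PP^n$-duality, $k$ small) the rectangle $(k+1)\times(n-k)$ is very flat or very tall, and a single choice of $(a,b)$ may not simultaneously have total codimension $n+1$ and force an overflow. The clean fix is to use the duality $\G(k,n) \simeq \G(n-k-1,n)$ from the proof of Proposition \ref{prop:hell} to reduce to $2k \le n-1$, and then take, say, $a = \sigma_{(n-k)}$ (one row of $n-k$ boxes) and $b = \sigma_{(1^{k+1})}$ (one column of $k+1$ boxes); these have $|a| + |b| = (n-k) + (k+1) = n+1$, and any Littlewood–Richardson filling of $c/a$ of weight $(1^{k+1})$ must place a box in column $n-k+1$ of some row — indeed the lattice-word and column-strict conditions on the weight $(1^{k+1})$ datum force the new boxes to form a vertical strip, which cannot fit below $a$ inside $k+1$ rows without widening row $1$ past $n-k$ — so $\sigma_{(n-k)} \cdot \sigma_{(1^{k+1})} = 0$ in $\HH^{2(n+1)}(\G(k,n))$, while $\f^* c_1$ of the relevant bundle is $\sigma_{(n-k)} \ne 0$ and $\sigma_{(1^{k+1})} \ne 0$. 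Combining the two bounds gives $\ed(\G(k,n)) = n$; I would then remark that this, together with Proposition \ref{thm2}, immediately yields Theorem \ref{main} and Corollary \ref{cor:main}.
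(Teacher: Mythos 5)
Your overall strategy is the same as the paper's: prove $\ed(\G(k,n))\ge n$ by reducing products of effective classes to products of Schubert cycles and invoking Proposition \ref{prop:hell}, and prove $\ed(\G(k,n))\le n$ by exhibiting two nonzero effective classes of total codimension $n+1$ whose product vanishes. The one genuine gap is in the lower bound, at exactly the point where the paper has to quote a nontrivial theorem. You assert that, since the Schubert cycles form a basis of $\HH^{2i}(\G(k,n))$, the effectivity of $x_i$ ``forces nonnegativity of the coefficients'' in that basis. That implication is not valid: a basis tells you nothing about how the effective cone sits with respect to it, and for a general variety the effective cone in intermediate codimension need not be spanned by any preferred cycle classes. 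What is actually needed is the statement that the cone of effective classes of codimension $i$ in $\G(k,n)$ is the polyhedral cone generated by the Schubert classes of codimension $i$; this is the result of \cite{FMSS} (as explained in \cite{Kopper}) that the paper cites. Alternatively one can justify it directly on the homogeneous space $\G(k,n)$: the coefficient of $\sigma_a$ in the expansion of an effective class is its intersection number with the dual Schubert class, which is nonnegative by Kleiman transversality. Once this fact is supplied, the rest of your lower-bound argument (no cancellation, since all Littlewood--Richardson coefficients and all products $r_as_b$ are nonnegative, and some $\sigma_a\sigma_b\neq 0$ by Proposition \ref{prop:hell}) is correct.

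Your upper bound is correct and cleanly argued: for $a=(n-k,0,\dots,0)$ and $b=(1,\dots,1)$ ($k+1$ ones), any Littlewood--Richardson filling of weight $(1^{k+1})$ must be a vertical strip (each label occurs once, and the lattice-word condition forces the labels to be read in increasing order, so no two new boxes can share a row), and a vertical strip of $k+1$ boxes cannot be attached to the full first row inside the $(k+1)\times(n-k)$ rectangle; since $|a|+|b|=n+1$, this gives $\ed(\G(k,n))\le n$. Two small remarks: this pair of partitions is admissible for every $k$ and $n$, so the duality reduction $2k\le n-1$ you invoke as a ``fix'' is not needed for this step (it is only used inside Proposition \ref{prop:hell}); and the aside identifying $\sigma_{(n-k)}$ with a first Chern class is off (it is $c_{n-k}(\cQ)$, not $c_1$), but immaterial, since Schubert classes are nonzero and effective by definition. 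The paper's own witness for the upper bound is the pair $a=(1,\dots,1)$, $b=(k+1,0,\dots,0)$; your choice plays the same role and has total codimension exactly $n+1$ uniformly in $k$.
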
 

\begin{proof}
By \cite[Corollary of Theorem 1]{FMSS}, as explained in \cite[Section 3]{Kopper}, 
the cones of effective classes of a fixed codimension $i$ in $\G(k,n)$ are polyhedral cones generated
by the Schubert classes of the same codimension.
Therefore, if  $\Gamma_i \in \HH^{2i}(\G(k,n))$, $\Delta_{j} \in  \HH^{2j}(\G(k,n))$ are two effective nonzero classes we may write:
$$
\Gamma_i=\sum_{|a|=i} \lambda_a \sigma_a,\quad \Delta_{j}=\sum_{|b|=j}\mu_b\sigma_{b},
$$
where  $\lambda_a, \mu_b \ge 0$.

Assume that $i+j \le n$; we want to show that $\Gamma_i \Delta_j\not=0$; since
every intersection $\sigma_{a} \sigma_{b}$ is a combination  of Schubert cycles, with nonnegative coefficients, due to the Littlewood--Richardson rule, it is enough to prove the statement for $\Gamma_i = \sigma_{a}$ and $\Delta_j=\sigma_{b}$, and this has been done in Proposition \ref{prop:hell}. Therefore $\ed(\G(k,n))\ge n$.

To show that $\ed(\G(k,n)) = n$ it is enough to consider $a=(1,1, \dots, 1)$ and $b=(k+1,0, \dots, 0)$. For the corresponding cycles we have $\sigma_a \sigma_b=0$, since there is no way of adding $k+1$ boxes marked with $1$ to the diagram of $a$ without having two boxes in the same column.
\end{proof}

As observed at the beginning of the section, Theorem \ref{main} follows combining Theorem \ref{cor:edg} and Proposition \ref{thm2}. 
\begin{proof}[Proof of Corollary \ref{cor:main}]
A rational homogeneous variety $X$ of type $\DA_m$ is a variety of partial flagsof linear subspaces  of dimensions $(l_1, \dots, l_i)$ in the projective space $\PP^m$. 

The variety $X$ has Picard number $i$, and morphisms $\pi_j \colon X \to \G(l_j,m)$ for every $j=1, \dots, i$, which send a partial flag $\Lambda^{l_1} \subset \Lambda^{l_2} \subset \dots \subset \Lambda^{l_i}$ to the point of $\G(l_j,m)$ parametrizing $\Lambda^{l_j}$.

If $\varphi \colon \G(k,n) \to X$ is a morphism, then $\pi_j \circ \varphi$ is constant for every $j=1, \dots, i$
by Theorem \ref{main}, hence $\varphi$ is constant.  
\end{proof}

\bibliographystyle{plain}

\begin{thebibliography}{1}

\bibitem{3264}
David Eisenbud and Joe Harris.
\newblock {\em 3264 and all that---a second course in algebraic geometry}.
\newblock Cambridge University Press, Cambridge, 2016.

\bibitem{Fult}
William Fulton.
\newblock {\em Intersection theory}, volume~2 of {\em Ergebnisse der Mathematik
  und ihrer Grenzgebiete. 3. Folge. A Series of Modern Surveys in Mathematics
  [Results in Mathematics and Related Areas. 3rd Series. A Series of Modern
  Surveys in Mathematics]}.
\newblock Springer-Verlag, Berlin, second edition, 1998.

\bibitem{FMSS}
William Fulton, Robert~D. MacPherson, Frank Sottile, and Bernd Sturmfels.
\newblock Intersection theory on spherical varieties.
\newblock {\em J. Algebraic Geom.}, 4(1):181--193, 1995.

\bibitem{Kopper}
John Kopper.
\newblock Effective cycles on blow-ups of {G}rassmannians.
\newblock {\em J. Pure Appl. Algebra}, 222(4):846--867, 2018.

\bibitem{MOS6}
Roberto Mu\~{n}oz, Gianluca Occhetta, and Luis~E. Sol\'{a}~Conde.
\newblock Splitting conjectures for uniform flag bundles.
\newblock {\em Eur. J. Math.}, 6(2):430--452, 2020.

\bibitem{MOS1}
Roberto Mu{\~n}oz, Gianluca Occhetta, and Luis~E. Sol{\'a}~Conde.
\newblock Uniform vector bundles on {F}ano manifolds and applications.
\newblock {\em J. Reine Angew. Math.}, 664:141--162, 2012.

\bibitem{Pan}
Xuanyu Pan.
\newblock Triviality and split of vector bundles on rationally connected
  varieties.
\newblock {\em Math. Res. Lett.}, 22(2):529--547, 2015.

\bibitem{Satou}
E.~{Sato}.
\newblock {Uniform vector bundles on a projective space}.
\newblock {\em {J. Math. Soc. Japan}}, 28:123--132, 1976.

\bibitem{Tango1}
Hiroshi {Tango}.
\newblock {On \((n-1)\)-dimensional projective spaces contained in the
  Grassmann variety \(\mathrm{Gr}(n,1)\)}.
\newblock {\em {J. Math. Kyoto Univ.}}, 14:415--460, 1974.

\end{thebibliography}

\end{document}